\title{On the topological essential
range and regularity of cocycles over compact and generic systems}
\author{Vyacheslav Kulagin}
\address{Vyacheslav Kulagin, Institute for Low Temperature Physics and Engineering, 47 Lenin Ave., Kharkov 61103, Ukraine.}
\email{\{skulagin@rambler.ru\}} \subjclass[2000]{37B05, 37B20}
\thanks{The author was supported by grant INTAS YSF-05-109-5200.}
\newtheorem{theorem}{\sc Theorem}[section]
\newtheorem{proposition}[theorem]{\sc Proposition}
\newtheorem{corollary}[theorem]{\sc Corollary}
\newtheorem{definition}[theorem]{Definition}
\theoremstyle{remark}
\newtheorem{remark}[theorem]{\sc Remark}
\def\N{\mathbb{N}}
\def\Z{\mathbb{Z}}
\def\T{\mathbb{T}}
\def\mR{\mathbb{R}}
\def\R{\EuScript{R}}
\newcommand{\wt}[1]{\widetilde{#1}}
\newcommand{\wh}[1]{\widehat{#1}}
\newcommand{\ov}[1]{\overline{#1}}
\begin{document}

\maketitle

\begin{abstract}
\noindent We consider the notions of topological essential range and
regularity for continuous cocycles over minimal $\Z$-systems introduced in
\cite{GH} and discuss relations with their generic counterparts. The
alternative generic definitions can be given by using the notion of generic
Mackey action associated with a cocycle. We further present a description
of recurrent cocycles over minimal rotations with values in discrete groups
and derive several consequences.
\end{abstract}

\section{Introduction and preliminaries}

The concept of topological essential range for continuous cocycles over
compact minimal systems was introduced in \cite{Atkinson},\cite{LeM} where
the case of abelian groups of values has been under research. A
generalization to a non-abelian case was made in \cite{GH}. It was applied
to obtain several results on regularity of topological cocycles which
permit to describe the topological ergodic decomposition and structure of
orbit closures in skew product actions.

Our aim is to describe the interplay between the above mentioned notions
and their generic analogues. We consider 
Polish group valued cocycles over Polish minimal systems and suggest a
parallel approach based on the notion of generic Mackey action associated
with a cocycle (\cite{GK}). It appears that several important properties of
continuous cocycles related with essential ranges and regularity can be
derived using such a generic approach but in more general situations.

We further proceed with the study of regularity problem for cocycles over a
minimal rotation on a compact monothetic group. We completely describe the
case when the group of values is discrete. It is applied for getting
several reduction results for cocycles taking values in locally compact
groups. Unlike the situation in \cite{GH}, where rotations on locally
connected compact groups are considered, the most of our results are
referred to a disconnected or arbitrary base space.

Let $X$ be a perfect Polish space, $T$ a homeomorphism of $X$ and $G$ a
Polish group. A continuous map $f:X\to G$ defines a $\Z$-cocycle by:
\begin{equation*}
f(n,x)=
\begin{cases}
f(T^{n-1} x)\cdot \ldots \cdot f(T x)\cdot f(x) & \textup{if}\enspace n\geq 1, \\
e & \textup{if}\enspace n=0, \\
f(-n,T^n x)^{-1} & \textup{if}\enspace n < 0,
\end{cases}
\end{equation*}

A skew product action is a continuous $\Z$-action on $X\times G$ defined
by: \begin{equation*} T_f^n (x,g) = \big(T^n x, f(n,x)\cdot g\big)
\end{equation*}

A cocycle $f$ is said to be {\sl recurrent} if the skew product $T_f$ is
topologically conservative (i.e. for every open nonempty $O\subset X\times
G$ there exists an integer $n\neq 0$ with $T_f ^n O\cap O \neq
\varnothing$). $f$ is called {\sl ergodic} if the skew product $T_f$ is
topologically ergodic.

We will denote by $\R_T$ the equivalence relation on $X$ generated by $T$:
$\R_T=\{(T^n x,x):x\in X,\:n\in \Z\}$ and by $\wt \R_T$ the equivalence
relation (called the {\sl generic ergodic decomposition} (\cite{We},
\cite{Ke1})) on $X$ defined by: $(x,y)\in \wt \R_T \Leftrightarrow
\ov{T^{\Z} x}=\ov{T^{\Z} y}$.

Let us recall the definition of generic Mackey action associated with a
cocycle (see \cite{GK}). We formulate it here for our situation when the
base transformation group is $\Z$ (acting by $T$) and the cocycle $f:X\to
G$ is continuous. Let $\Omega=(X\times G)/\wt \R_{T_f}$ denote the
factor-space with the factor-topology and $\phi:X\times G\rightarrow
\Omega$ the factor-map. The following properties of $\Omega$ are described
in \cite{GK}: $\Omega$ is a Baire, second countable $T_0$-space (not
necessarily Hausdorff). The map $\phi$ is open, for any meager $S \subset
\Omega$, $\phi^{-1}(S)$ is meager too, and for any meager $\widetilde
\R_{T_f}$-invariant $L\subset X\times G$, $\phi (L)$ is meager. The Borel
structure on $\Omega$ generated by its topology is standard. Let $V(G)$ be
the right translation action on $X\times G$: $V(g)(x,h)=(x,hg^{-1})$.
\begin{definition} (\cite{GK}) The action $W_f(G)$ of the group $G$ on the
space $\Omega$ defined by
$$
W_f(g)\omega = \phi (V(g) y),
$$
where $y\in \phi ^{-1}(\omega)$, $\omega \in \Omega, g \in G,$ is called
the {\bf generic Mackey action} associated with the cocycle $f$.
\end{definition}

It is shown in \cite{GK} that $W_f(G)$ is a continuous action and it is
minimal iff $T$ is. We will denote by $G_{\omega}$, where $\omega\in
\Omega$, the stability group at the point $\omega$: $G_{\omega}=\{g\in G:
\: W_f(g)\omega=\omega\}$. Regardless of the fact that $\Omega$ can be even
not $T_1$-space every $G_{\omega}$ is closed (see \cite{GK}).

From now we will suppose that $T$ is minimal. A cocycle $f$ is called {\sl
generically regular} if, modulo a meager subset of $X$, it is (Borel)
cohomologous to a (generally Borel) ergodic cocycle taking values in a
closed subgroup of $G$ (\cite{GK}). This property has been investigated in
\cite{GK} and it was shown there that the generic regularity of a cocycle
$f$ is equivalent to the essential transitivity of the generic Mackey
action $W_f(G)$ (i.e., modulo a meager subset of $\Omega$, $W_f(G)$ is a
transitive action).

\section{Regularity and essential ranges: a generic approach}



The following definition of the notion of topological essential range comes
from \cite{GH} but we give it here in a more general situation.

Let $G$ be a Polish group, $f:X \to G$ a continuous cocycle of a minimal
Polish system $(X,T)$.

\begin{definition}
The local essential range $E_x(f)$ at the point $x \in X$ is defined by:
$g\in E_x(f)$ if for any open neighborhood $U=U(g)$ and any open
neighborhood $O=O(x)$ there exists $n\neq 0$ such that the set $O\cap
T^{-n} O\cap \{x: f(n,x)\in U\}$ is nonempty.
\end{definition}

As in \cite{GH} we will denote by $P_x(f)$ the set $\{ g\in G: (x,g)
\in\overline{T_f^\Z (x,e)} \}$. Then $P_x(f)\subset E_x(f)$ for every $x\in
X$.

The unit $e$ belong to the essential range $E_x$ iff the cocycle $f$ is
recurrent. We will always consider below recurrent cocycles.

Now let us consider the following family of closed subgroups of $G$:
\begin{equation*}\{G_{\phi(x,e)}\}_{x\in X}\end{equation*}
which are stabilizers of the generic Mackey action associated to $f$ at
points of the form $\phi(x,e)$. One easily sees that it satisfies the
conjugacy equation:
\begin{equation*}
G_{\phi(T^n x,e)} = f(n,x) \cdot G_{\phi(x,e)} \cdot f(n,x)^{-1},
\end{equation*}
for all $x\in X$ and $n\in\Z$.

The following proposition shows that on a dense $G_{\delta}$-subset of $X$
this family coincides with the family of essential ranges $\{E_x\}_{x\in
X}$. Thus, we get an alternative generic definition of the notion of local
essential range given in terms of an associated generic Mackey action. By
the way, the proof presented below is a different proof of the fact that
the topological essential ranges are groups on a comeager subset of $X$ and
it fits for a general Polish situation (cf. \cite[Proposition 1.1]{GH},
where it is assumed $X$ is compact and $G$ is locally compact).

\begin{proposition}\label{ess_r}
There exists an invariant dense $G_{\delta}$-set $X_0\subset X$ such that
for every $x\in X_0$ one has $E_x=P_x=G_{\phi(x,e)}$.
\end{proposition}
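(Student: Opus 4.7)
\noindent\emph{Proof plan.} The plan is to establish the pointwise inclusions $G_{\phi(x,e)} \subset P_x \subset E_x$ for every $x$, together with the symmetry $E_x = E_x^{-1}$ and the algebraic identity $G_{\phi(x,e)} = P_x \cap P_x^{-1}$, and then reduce the proposition to producing a dense $G_\delta$-set $X_0$ on which $E_x \subset P_x$. The first inclusion is obtained by unpacking $g \in G_{\phi(x,e)}$: this says $W_f(g)\phi(x,e) = \phi(x,e)$, equivalently $\phi(x, g^{-1}) = \phi(x,e)$, i.e.\ $F_x \cdot g^{-1} = F_x$ where $F_x := \overline{T_f^{\Z}(x,e)}$; evaluating at $(x,e) \in F_x$ gives $(x,g) \in F_x$, so $g \in P_x$. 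The symmetry $E_x = E_x^{-1}$ follows by replacing witnesses $(y_k, n_k)$ with $(T^{n_k}y_k, -n_k)$ and using $f(-n, T^n y) = f(n,y)^{-1}$. Finally, since the right translation $V(g)$ commutes with $T_f$, the set $F_x \cdot g$ is the $T_f$-orbit closure of $(x,g)$, so $F_x \cdot g \subset F_x$ is equivalent to $(x,g) \in F_x$, i.e.\ $g \in P_x$; applying this in both directions yields $G_{\phi(x,e)} = P_x \cap P_x^{-1}$.

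Granted these, if $X_0$ is a dense $G_\delta$ with $E_x \subset P_x$, then on $X_0$ the chain
$E_x = E_x^{-1} \subset P_x \cap P_x^{-1} = G_{\phi(x,e)} \subset P_x \subset E_x$
forces all three sets to coincide. The conjugacy equations $E_{Tx} = f(x) E_x f(x)^{-1}$ and $P_{Tx} = f(x) P_x f(x)^{-1}$ (proved by the same witness-transport trick using the cocycle identity) combined with the one given in the paper for $G_{\phi(x,e)}$ make $X_0$ automatically $T$-invariant. To construct $X_0$ I fix a countable base $\{V_i\}$ for $G$ and a countable dense set $\{r_j\}$ of positive reals, and introduce the open sets
$$U_{V,r} := \{x \in X : \exists\, n \neq 0,\ y \in B(x,r),\ T^n y \in B(x,r),\ f(n,y) \in V\},$$
$$W_{V,r} := \{x \in X : \exists\, n \neq 0,\ T^n x \in B(x,r),\ f(n,x) \in V\}.$$
From the definitions one has $g \in E_x \iff x \in U_{V,r}$ for all basic $V \ni g$ and all $r > 0$, and similarly for $P_x$ via $W_{V,r}$. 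The central step is to show that $U_{V_i,r_j} \setminus W_{V_i,r_j}$ is nowhere dense for every $i,j$; then $X_0$ is the complement of the resulting meager union, and any $g \in E_x$ is extracted as a limit $f(n_k, x) \to g$ with $T^{n_k} x \to x$.

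The principal obstacle is exactly the density claim $\overline{W_{V,r}} \supset U_{V,r}$: given $x \in U_{V,r}$ with witnesses $y$ and $n$, one must exhibit a nearby $x' \approx x$ whose \emph{own} $T_f$-orbit witnesses the stronger condition. The plan is to combine minimality of $T$ (so $T^m x$ is dense, and can be driven arbitrarily close to $y$) with the topological conservativity of $T_f$: conservativity forces the set of $T_f$-recurrent points to be a dense $G_\delta$ of $X \times G$, and by Kuratowski--Ulam the set of $x$ for which $(x,e)$ is $T_f$-recurrent is dense $G_\delta$ in $X$. For such $x$ one finds $m$ with $T_f^m(x,e) \approx (y,e)$, that is $T^m x$ close to $y$ \emph{and} $f(m,x)$ close to $e$; then the cocycle identity $f(m+n, x) = f(n, T^m x)\, f(m, x)$ together with continuity of $f(n, \cdot)$ yields $T^{m+n}x \approx T^n y \in B(x,r)$ and $f(m+n, x) \approx f(n, y) \in V$, placing $x$ itself into $W_{V, r'}$ at a scale $r'$ only slightly larger than $r$, and the small mismatch is absorbed by the countable enumeration. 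The delicate technical point, which I expect to be where the bulk of the work lies, is securing the simultaneous conditions ``position $\approx y$'' (via minimality) and ``cocycle $\approx e$'' (via recurrence) at a common return time $m$ for a comeager set of base points $x$.
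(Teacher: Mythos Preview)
Your reduction in the first two paragraphs is clean and correct: the identities $G_{\phi(x,e)}\subset P_x\subset E_x$, $E_x=E_x^{-1}$, and $G_{\phi(x,e)}=P_x\cap P_x^{-1}$ all hold for every $x$, so it really does suffice to produce a dense $G_\delta$ on which $E_x\subset P_x$. Your scheme with the open sets $U_{V,r}$, $W_{V,r}$ is also a legitimate reformulation, and the implication ``$W$ open, $U\subset\overline{W}$ $\Rightarrow$ $U\setminus W$ nowhere dense'' is valid.

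The gap is exactly where you put your finger on it, and it is not merely ``delicate'': the step ``find $m$ with $T_f^m(x,e)\approx(y,e)$'' asks for $(y,e)$ to lie in the $T_f$-orbit closure of $(x,e)$. Recurrence of $(x,e)$ gives only $T_f^m(x,e)\approx(x,e)$; it says nothing about visiting a \emph{different} target $(y,e)$. Your proposed synchronisation --- minimality of $T$ to force $T^mx\approx y$, recurrence of $T_f$ to force $f(m,x)\approx e$ --- does not work, because these two conditions constrain $m$ in unrelated ways and there is no mechanism to make them hold at a common time. Concretely, for a continuous coboundary $f=c\circ T\cdot c^{-1}$ one has $f(m,x)=c(T^mx)c(x)^{-1}$, so $f(m,x)\approx e$ forces $c(T^mx)\approx c(x)$, which (for injective $c$) pins $T^mx$ near $x$, not near an arbitrary $y\in B(x,r)$. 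In such examples $E_x$ happens to be trivial so the conclusion holds vacuously, but the argument you propose would already fail to produce the required $m$; this shows the tool is too weak in general. (A minor side remark: Kuratowski--Ulam does not give ``$(x,e)$ recurrent for comeager $x$'' from ``recurrent points comeager in $X\times G$''; what does is the $V(G)$-equivariance of the recurrent set, which makes it of the form $R\times G$.)

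The paper bypasses this obstacle by an entirely different mechanism. It passes to the orbit-closure space $\Omega=(X\times G)/\widetilde\R_{T_f}$, extracts a dense $G_\delta$ subspace $\Omega_0\subset\Omega$ that is genuinely Polish (hence Hausdorff), and pulls it back to $Y_0=\phi^{-1}(\Omega_0)$. For $x_0$ in the resulting comeager $X_0$ and $g\in E_{x_0}$ with witnesses $y_k\to x_0$, $T^{n_k}y_k\to x_0$, $f(n_k,y_k)\to g$ (taken in $X_0$), the intersection $A=\bigcap_k A_{y_k}\cap A_{x_0}$ is comeager in $G$, so by Pettis's theorem $g=g''(g')^{-1}$ with $g',g''\in A$. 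Then the constant sequence $\phi(y_k,g')=\phi(T_f^{n_k}(y_k,g'))$ lies in the Hausdorff space $\Omega_0$ and converges simultaneously to $\phi(x_0,g')$ and $\phi(x_0,g'')$, forcing these to coincide and hence $g\in G_{\phi(x_0,e)}$. The Pettis trick --- shifting from the fixed fibre point $e$ to a \emph{generic} $g'$ --- is precisely what replaces your unattainable ``$f(m,x)\approx e$'' condition, and the Hausdorff $G_\delta$ in $\Omega$ is what makes the limit argument go through.
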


\begin{proof} Observe that $G_{\phi(x,e)} \subset P_x$ for
any $x\in X$. Indeed, $g^{-1}\in G_{\phi(x,e)}$ means that $V(g^{-1})\R_f
(x,e)=\R_f (x,e)$ so $(x,g)\in \overline{T_f^\Z (x,e)}$. Thus it suffices
to prove that $E_x\subset G_{\phi(x,e)}$ on some dense $G_{\delta}$-subset.
Let $\Omega=(X\times G)/\wt \R _f$ be the topological factor-space. By
\cite[Proposition 6]{GK} there exists a dense $G_{\delta}$-subset $\Omega
_0\subset \Omega$ such that $\Omega _0$ is a Polish space. Then $Y_0=\phi
^{-1}(\Omega _0)$ is dense $G_{\delta}$ in $X\times G$. By the topological
Fubini theorem there exists a dense $G_{\delta}$-set $X_0\subset X$ with
the property $\{g\in G: (x,g)\in Y_0\}$ is comeager in $G$ for every $x\in
X_0$. We claim that $E_x\subset G_{\phi(x,e)}$ on $X_0$. Fix $x_0\in X_0$
and suppose $g\in E_{x_0}$. Let us denote $A_x=\{g\in G: (x,g)\in Y_0\}$.
It follows from the definition of the essential range that there exist
sequences $\{y_k\}$, $\{n_k\}$ with $y_k\rightarrow x_0$,
$T^{n_k}y_k\rightarrow x_0$ and $f(n_k,y_k)\rightarrow g$. Moreover, one
can choose $\{y_k\}\subset X_0$ as for open $O\subset X$, $U\subset G$ and
$n\in \Z$ the set $O\cap T^{-n}O\cap \{x\in X:f(n,x)\in U\}$ is open, so in
case of it is nonempty it intersects $X_0$. Then $A=\bigcap_{k} A_{y_k}
\cap A_{x_0}$ is comeager in $G$. By Pettis theorem (see \cite[9.9]{Ke2})
$A\cdot A^{-1}=G$, so $g$ can be represented of the form $g=g'' \cdot
{g'}^{-1}$ where $g', g'' \in A$. Then all the points $(y_k,g'), T_f^{n_k}
(y_k,g'), (x_0,g'), (x_0,g'')$ belong to $Y_0$ so their corresponding
images under $\phi$ belong to $\Omega_0$. Since $\phi(y_k,g')\rightarrow
\phi(x_0,g')$, $\phi(T_f^{n_k}(y_k,g'))\rightarrow \phi(x_0,g'')$ and
$\phi(y_k,g')=\phi(T_f^{n_k} (y_k,g'))$ we conclude that
$\phi(x_0,g')=\phi(x_0,g'')$ as $\Omega_0$ is already a Hausdorff space.
This yields $W(g)\phi(x,e)=\phi(x,e)$, i.e. $g\in G_{\phi(x,e)}$.
\end{proof}

\medskip

Thus, the only difference between $G_{\phi(x,e)}$ and $E_x$ can occur on a
meager subset of $X$. But even for these exceptional points we have the
inclusion $G_{\phi(x,e)}\subseteq E_x$. Note also that the properties
$E_x=E_x ^{-1}$, $E_x$ is closed and $E_{T^n x}=f(n,x) \cdot E_x \cdot
f(n,x)^{-1}$ are holds on the whole space $X$.

Assume now that $G$ is locally compact. Then the mapping $x\mapsto
G_{\phi(x,e)}$ from $X$ to the space $(S,\mathfrak F)$ of closed subgroups
of $G$ with the Fell topology is Borel and therefore it is continuous on a
dense $G_{\delta}$-subset of $X$.

\medskip

We recall the definition of regularity for cocycles introduced in
\cite{GH}: let $(X,T)$ be a minimal compact system, then a continuous
cocycle $f:X\to G$ is called {\sl regular} if the skew product $T_f$ admits
a surjective orbit closure closure, i.e. there exists a point $(x_0,g_0)\in
X\times G$ such that $\pi_X \big(\ov{T_f ^{\Z} (x_0,g_0)}\big)=X$. Such a
definition differs from that given in \cite{LeM}, where a stronger
condition must be fulfilled.

\medskip

\begin{proposition}\label{reggen}
Let $f$ be a continuous cocycle over a minimal compact system $(X,T)$ with
values in a locally compact group $G$. Then $f$ is regular if and only if
it is generically regular.
\end{proposition}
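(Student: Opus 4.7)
I will use the characterization from \cite{GK} that generic regularity of $f$ is equivalent to essential transitivity of the generic Mackey action $W_f(G)$ on $\Omega$, that is, to the existence of an $\omega_0\in\Omega$ whose $W_f(G)$-orbit is comeager. The proof then amounts to matching, under compactness of $X$ and local compactness of $G$, surjectivity of the $X$-projection of some $T_f$-orbit closure with comeagerness of some $W_f(G)$-orbit on $\Omega$. In both directions the key object is, for a chosen base point $(x_0,g_0)$, the orbit closure $M := \ov{T_f^{\Z}(x_0,g_0)}$ together with the set $M^0\subseteq M$ of transitive points, which is a dense $G_\delta$-subset of the Polish space $M$ by topological transitivity of $T_f|_M$. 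Setting $\omega_0:=\phi(x_0,g_0)$, one has $\phi^{-1}(\omega_0)=M^0$, and since $V(G)$ commutes with $T_f$ and fixes $X$-coordinates a direct computation yields
\[
\phi^{-1}(W_f(G)\omega_0)\;=\;V(G)\cdot M^0\;=\;\pi_X(M^0)\times G.
\]
As this set is $\wt{\R}_{T_f}$-invariant, the meager-preserving properties of $\phi$ recorded in the preliminaries imply that $W_f(G)\omega_0$ is comeager in $\Omega$ if and only if $\pi_X(M^0)$ is comeager in $X$.

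For the direction regular $\Rightarrow$ generically regular, I start from a witness $(x_0,g_0)$ with $\pi_X(M)=X$ and aim at comeagerness of $\pi_X(M^0)$ in $X$. The set $M\setminus M^0$ is meager $F_\sigma$ in $M$; to transfer its meagerness to a meager $X$-shadow I would carry out a fibrewise Kuratowski--Ulam argument on $M$, invoking the cocycle identity $M_{Tx}=f(x)M_x$ along with Proposition~\ref{ess_r} and the Fell-continuity of the stabilizer assignment $x\mapsto G_{\phi(x,e)}$ noted after it (for locally compact $G$) to deduce that the slices $M_x$ are cosets of a closed subgroup varying upper semicontinuously in the Fell topology, providing $M$ with enough of a local-product structure over $X$ for Kuratowski--Ulam to apply.

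For the converse, let $W_f(G)\omega_0$ be a comeager orbit and take $(x_0,g_0)\in\phi^{-1}(\omega_0)$. By the reduction above, $\pi_X(M^0)$ is comeager in $X$, hence $\pi_X(M)\supseteq\pi_X(M^0)$ is a comeager and $T$-invariant subset of $X$, in particular dense. The decisive step is the passage from density of $\pi_X(M)$ to the equality $\pi_X(M)=X$; this is the main obstacle in the proof. I would close the gap by exploiting the same upper-semicontinuous coset structure of the slices $M_x$ -- available thanks to compactness of $X$, local compactness of $G$, and the Fell-continuity of $x\mapsto G_{\phi(x,e)}$ from Proposition~\ref{ess_r} -- to show that $\pi_X(M)$ is in fact closed in $X$; density then forces $\pi_X(M)=X$. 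Compactness of $X$ and local compactness of $G$ both enter essentially at this step, which is why the proposition is stated only in that setting.
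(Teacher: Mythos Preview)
Your reduction to the equivalence ``$W_f(G)\omega_0$ is comeager in $\Omega$ $\Leftrightarrow$ $\pi_X(M^0)$ is comeager in $X$'' is correct and matches the paper's viewpoint. The problem is in the ``decisive step'' of the converse direction.

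Having obtained that $\pi_X(M)$ is comeager (hence dense) and $T$-invariant, you propose to finish by showing that $\pi_X(M)$ is \emph{closed}, via the coset description of the fibres $M_x$ and Fell-continuity of $x\mapsto G_{\phi(x,e)}$ coming from Proposition~\ref{ess_r}. This does not work. The identification $M_x=G_{\phi(x,e)}\cdot g$ and the Fell-continuity are only available on a dense $G_\delta$-subset of $X$, so they give you no control at a limit point $x=\lim x_n$ with $x_n\in\pi_X(M)$; and even granting upper semicontinuity of $x\mapsto M_x$ in the Fell topology, nothing prevents the cosets $M_{x_n}$ from escaping to infinity in $G$, in which case the Fell-limit is the empty set and $M_x=\varnothing$ is perfectly consistent. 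In short, $\pi_X$ is not a closed map when $G$ is noncompact, and your proposed structure does not repair this.

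The paper closes the gap by a much simpler device that uses local compactness of $G$ directly: writing $G=\bigcup_n K_n$ with $K_n$ compact, one has $\pi_X(M)=\bigcup_n \pi_X\big((X\times K_n)\cap M\big)$, a countable union of \emph{compact} subsets of $X$. Since this union is comeager, Baire's theorem gives some $\pi_X\big((X\times K_m)\cap M\big)$ with nonempty interior $O$; then $T$-invariance of $\pi_X(M)$ and compact minimality of $(X,T)$ yield $X=\bigcup_{i=1}^{k}T^iO\subseteq\pi_X(M)$. No Fell topology or fibre structure is needed. Your forward direction suffers from a milder version of the same circularity (the ``local product structure'' of $M$ that would justify a fibrewise Kuratowski--Ulam argument is itself only established on a comeager set), whereas the paper handles that implication directly with the category-preserving properties of $\phi$.
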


\begin{proof} Let $\Omega=(X\times G)/\wt \R _f$. Suppose $f$ is regular. Let
$C$ be a surjective orbit closure so that $C=\overline{T_f^\Z (x,e)}$ for
some $x\in X$. It follows $V(G)C=X\times G$ and it is routine to verify
that then $V(G)\wt \R_f [(x,e)]$ is comeager in $X\times G$. This implies
the orbit $W_f (G)\phi (x,e)$ is comeager in $\Omega$, i.e. the generic
Mackey action $W_f(G)$ is essentially transitive and hence $f$ is
generically regular (\cite[Prop. 17]{GK}).

Conversely, suppose $f$ is generically regular. Then, again by \cite[Prop.
17]{GK}, the action $W_f(G)$ is essentially transitive so there exists
$\omega_0 \in \Omega$ with $W_f(G)\omega_0$ comeager in $\Omega$. This
yields $V(G)\wt \R_f [(x_0,g_0)]$ is comeager in $X\times G$ for some
$(x_0,g_0)\in {\phi}^{-1}(\omega_0)$. Hence $\pi(\wt \R_f [(x_0,g_0)])$ is
comeager in $X$, where $\pi$ denotes the projection $X\times G
\longrightarrow X$. Let $C=\overline{T_f^\Z (x_0,g_0)}$. Then $\pi(C)$ is
$T$-invariant and comeager subset of $X$. Let $\{K_n\}_{n\geq 1}$ be a
countable family of compact subsets of $G$ with $G=\bigcup_n K_n$. Then
each set $F_n=\pi\big((X\times K_n)\cap C\big)$ is compact and their union
$\bigcup_{n\geq 1} F_n=\pi(C)$ is comeager. It follows from Baire's
category theorem that for some $m\in \N$ $F_m$ contains a non-empty open
subset $O$ of $X$. Since $(X,T)$ is a minimal compact system we have
$X=\bigcup_{i=1}^k T^{i} (O)$ for some $k\in \N$ and as $\pi(C)$ is
$T$-invariant we conclude $\pi(C)=X$. The latter means that $f$ is regular.
\end{proof}

It easily follows from the above proposition and the generic definition of
essential range that for a regular cocycle there exists a dense
$G_{\delta}$-subset of $X$ on which all the essential ranges are conjugate
(cf. \cite[Theorem 2.2]{GH}).

\begin{remark} Note that the smoothness of a generic Mackey action would imply
regularity of a cocycle in our situation. Furthemore, let us consider the
case when $G$ is a connected Lie group. Suppose we know that, modulo a
meager subset of $X$, each essential range is an almost connected subgroup.
Then, as the generic Mackey action $W_f(G)$ is ergodic with respect to the
$\sigma$-ideal of meager sets, we are in the assumptions of \cite[Corollary
4.3]{Dani} which imply that on a comeager subset of $X$ all essential
ranges are automorphic in $G$. If, additionally, one of the conditions
$(i)$, $(ii)$, $(iii)$, $(iv)$ of \cite[Corollary 4.4]{Dani} is satisfied
(for instance, $G$ is almost algebraic or $E_x$ is compact for all $x$ from
a comeager subset of $X$) we conclude that all essential ranges are
conjugate on a comeager subset of $X$.
\end{remark}

\begin{corollary}\label{cohomology}
Suppose $f$ is a regular continuous cocycle over a compact minimal system
(X,T) with values in a locally compact group $G$. Then the following is
true:

\begin{enumerate}
\item $f$ is Borel cohomologous to a (Borel) cocycle $\wh f$ which
takes values in a closed subgroup $H$ of $G$ and is ergodic in $H$.
\item If $\wt f$ is a continuous cocycle which is Borel cohomologous to $f$ then $\wt f$ is regular.
\end{enumerate}

\end{corollary}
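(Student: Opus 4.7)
My plan is to reduce both parts of the corollary to the equivalence \emph{regular $\Leftrightarrow$ generically regular} established in Proposition~\ref{reggen}, and then exploit the (essentially trivial) fact that generic regularity is preserved under Borel cohomology.

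For~(1), I would apply Proposition~\ref{reggen} to the given regular cocycle $f$, concluding that $f$ is generically regular. By the definition recalled in the introduction, this means precisely that there exist a closed subgroup $H\leq G$, a Borel cocycle $\hat f:X\to H$ that is ergodic in $H$, and a Borel transfer function $\psi:X\to G$ satisfying $\hat f(x)=\psi(Tx)^{-1} f(x)\psi(x)$ on a $T$-invariant comeager Borel set. Extending $\psi$ (for instance by setting $\psi\equiv e$ on the meager complement) and $\hat f$ arbitrarily into $H$ on this meager set gives the desired Borel cohomology between $f$ and $\hat f$, in the customary generic sense where the defining equation holds modulo a meager set.

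For~(2), suppose $\tilde f$ is a continuous cocycle and $\varphi:X\to G$ a Borel function implementing a Borel cohomology $\tilde f(x)=\varphi(Tx)^{-1} f(x) \varphi(x)$. I would use~(1) to write $f=\partial\psi\cdot\hat f$ (in the cohomological sense, modulo a meager set) with $\hat f$ ergodic in a closed subgroup $H\leq G$. Then $\tilde f=\partial(\varphi\psi)\cdot\hat f$ again holds off a meager set, so $\tilde f$ is itself generically regular. Since $\tilde f$ is moreover continuous, the converse half of Proposition~\ref{reggen} applies and yields that $\tilde f$ is regular.

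I foresee no deep obstacle, because the substantive content has been absorbed into Proposition~\ref{reggen}; what remains is simply the Borel-cohomology invariance of generic regularity, which is a one-line composition of transfer functions. The only mildly delicate bookkeeping is the extension of the Borel transfer function from a comeager $T$-invariant Borel subset of $X$ to all of $X$, but this is a routine manipulation in the generic framework and does not affect either the ergodicity of~$\hat f$ in~$H$ or the validity of the cohomology equation modulo a meager set.
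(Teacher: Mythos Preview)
Your argument is correct and follows the same line as the paper: both parts are immediate consequences of Proposition~\ref{reggen}, with~(2) additionally using that generic regularity is a Borel-cohomology invariant. The paper phrases this invariance via the invariance of the generic Mackey action under cohomology (\cite[Prop.~11]{GK}), whereas you compose transfer functions directly, but these are the same observation.
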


\begin{proof} The first part follows immediately from \ref{reggen}. The
second part is a consequence of \ref{reggen} and the fact that the generic
Mackey action is invariant under the cohomology equivalence (\cite[Prop.
11]{GK}).
\end{proof}

\begin{remark}
Since the generic Mackey action is an invariant of orbit equivalence for
cocycles (see \cite{GK}) we have that the assertion $(2)$ of the corollary
remains to be true even in case of $f$ is (continuously or generically)
orbit equivalent to $\wt f$.
\end{remark}

\medskip

The next technical result permits us to reduce the question of regularity
to consideration of a factor cocycle in the special situation of
factorization by a compact subgroup. For the proof of it we provide here
the generic arguments which, actually, work also for the case of a general
Polish group $G$.

\begin{proposition}\label{compact}
Let $G$ be a locally compact group, $f:X\to G$ a continuous cocycle over a
minimal compact system $(X,T)$. Suppose $K$ is a compact normal subgroup of
$G$ such that the factor cocycle $f_K=f/K$ is regular. Then $f$ is regular.
\end{proposition}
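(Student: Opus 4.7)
My plan is to work directly with the definition of regularity (existence of a point in $X\times G$ whose $T_f$-orbit closure projects onto $X$) and to use compactness of $K$ to lift such a point from $X\times G/K$ up to $X\times G$. Since $f_K$ is regular, I fix $(x_0,g_0 K)\in X\times G/K$ with $\pi_X\bigl(\ov{T_{f_K}^\Z (x_0,g_0 K)}\bigr)=X$, pick an arbitrary lift $g_0\in G$ of $g_0K$, and set $C=\ov{T_f^\Z (x_0,g_0)}\subset X\times G$. The aim is to prove $\pi_X(C)=X$.

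Fix $y\in X$. By the choice of $(x_0,g_0 K)$, there is a sequence $n_k\in\Z$ with $T^{n_k}x_0\to y$ and $f(n_k,x_0)\,g_0 K\to hK$ in $G/K$ for some $h\in G$. Here compactness of $K$ enters decisively: the quotient map $q\colon G\to G/K$ is proper, so any compact neighborhood $V$ of $hK$ has compact preimage $q^{-1}(V)\subset G$. The tail of $\{f(n_k,x_0)\,g_0\}$ lies in $q^{-1}(V)$, so I extract a convergent subsequence $f(n_{k_j},x_0)\,g_0\to h'$, necessarily with $h'\in hK$. Then $T_f^{n_{k_j}}(x_0,g_0)\to(y,h')$, so $(y,h')\in C$, and $\pi_X(C)=X$.

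The only non-formal step is this subsequence extraction, which rests squarely on compactness of $K$ (equivalently, properness of $q$): without this hypothesis $\{f(n_k,x_0)g_0\}$ could drift to infinity along $K$-cosets even while its image converges in $G/K$, and no $G$-lift of $y$ would sit in $C$. A parallel generic-Mackey-action argument, valid also for general Polish $G$, would introduce the factor map $\psi\colon\Omega_f\to\Omega_{f_K}$ induced by $q$, verify that it intertwines $W_f(G)$ with $W_{f_K}(G/K)$, and use compactness of $K$ to transfer essential transitivity of $W_{f_K}(G/K)$ to $W_f(G)$, invoking Proposition \ref{reggen} to conclude.
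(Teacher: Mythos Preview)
Your direct argument is correct: local compactness of $G$ gives local compactness of $G/K$, so $hK$ admits a compact neighborhood $V$, and properness of $q$ (a perfect map when $K$ is compact) traps a tail of $\{f(n_k,x_0)g_0\}$ in the compact set $q^{-1}(V)$, yielding the desired convergent subsequence and hence $\pi_X(C)=X$.

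This is, however, a genuinely different route from the paper's. The paper does not lift a surjective orbit closure directly; instead it passes through generic regularity. From regularity of $f_K$ it obtains a Borel cohomology $f_K \sim \varphi$ with $\varphi$ ergodic into a closed subgroup $H\subset G/K$, then invokes an orbit-equivalence result (\cite[Th.~3.3, 4.3]{GKS}) to replace $\varphi$ by a cocycle $\psi$ on $\R_T$ whose kernel is an ergodic subrelation. Lifting via a Borel section $s:G/K\to G$ produces a cocycle $\wt\psi$ whose restriction to that ergodic subrelation takes values in $K$; compactness of $K$ then forces this restriction to be generically regular (\cite[Prop.~20]{GK}), hence so is $\wt\psi$, and since $f$ is generically orbit equivalent to $\wt\psi$, Proposition~\ref{reggen} gives regularity of $f$. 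Your argument is far more elementary and self-contained for the locally compact case actually stated; the paper's detour through the generic Mackey machinery is what lets it claim validity for arbitrary Polish $G$ (where your compact-neighborhood step would need to be replaced by a metric argument using compactness of $K$ alone). Your closing sketch of a Mackey-action proof via a factor map $\Omega_f\to\Omega_{f_K}$ is also not the paper's line --- the paper goes through cohomology and orbit equivalence of cocycles rather than comparing the two Mackey spaces directly.
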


\begin{proof}
The cocycle $f_K$ is cohomologous to an ergodic cocycle
$\varphi=b(Tx)f_K(x) {b(x)}^{-1}$ with values in a closed subgroup $H$ of
$G/K$. Let $\R_T$ be the equivalence relation on $X$ generated by $T$. It
follows from \cite[Th. 3.3, 4.3]{GKS} that there exists a cocycle $\psi$
defined on $\R_T$ which is orbit equivalent to $\varphi$ with
$\mathrm{Ker}\: \psi$ being an ergodic subrelation of $\R_T$. Let
$s:G/K\rightarrow G$ be a Borel section with $\pi_K \circ s=id$, where
$\pi_K: G\to G/K$ is the projection. Then the values of the cocycle $\wt
\psi=(s\circ b)(Tx)\psi(x){(s\circ b)(x)}^{-1}$ on the ergodic subrelation
$\mathrm{Ker}\: \psi$ belong to $K$. This yields $\wt \psi|_{\mathrm{Ker}\:
\psi}$ is generically regular (\cite[Prop. 20]{GK}) and hence $\wt \psi$
is. Note that $f$ is generically orbit equivalent to $\wt \psi$ to complete
the proof.\end{proof}

\medskip

\section{Minimal rotations}

We turn here to the case when the base dynamical system is a minimal
rotation on a compact monothetic group $X$, i.e. $Tx=ax$, where $\{a^n:\:
n\in \Z\}$ is dense in $X$.

\begin{theorem}\label{discrete}
Let $G$ be a discrete countable group. Suppose $f:X\to G$ is a continuous
recurrent cocycle over a minimal compact group rotation $(X,T)$. Then $f$
is regular. Moreover, $f$ is (continuously) cohomologous to a cocycle $\wh
f$ taking values in a finite subgroup $K\subset G$ and $\wh f$ is ergodic.
\end{theorem}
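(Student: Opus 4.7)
The plan is to carry out two reductions, extract a finite-order obstruction from recurrence, and identify the orbit closure of $(0,e)$ as a topologically trivial principal bundle over $X$ with finite structure group.

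Because $f$ is continuous and $G$ is discrete, $f$ is locally constant and constant on cosets of the connected component $X^{0}$, so $f$ descends through $X\to X/X^{0}=:\bar X$; continuous coboundaries and topological ergodicity lift back from $\bar X$ to $X$, so I may assume $X$ is profinite. Then $f$ has finite image and factors through a finite cyclic quotient $\pi_{N}:X\to\Z/N$ via some $\bar f:\Z/N\to G$. Set $\Pi:=\bar f(N-1)\cdots\bar f(0)=f(N,0)$; since $\pi_{N}(a^{N})=0$, the cocycle identity gives $f(Nk,0)=\Pi^{k}$ for every $k\in\Z$.

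Next, recurrence forces $\Pi$ to have finite order. Indeed, because $T$ is an isometric rotation and $\pi_{N}$ is a continuous surjection onto a finite (hence discrete) group, any $n\neq 0$ with $T^{n}0$ close to $0$ must satisfy $n\equiv 0\pmod N$. Hence recurrence at $(0,e)$ produces, for every $\epsilon>0$, some $k\neq 0$ with $Nk\cdot a$ in an $\epsilon$-neighborhood of $0$ in $X$ and $\Pi^{k}=e$; since $\Pi$ is a single element of the discrete group $G$, $\Pi$ must have finite order $M$. Let $K:=E_{0}$ be the essential range at $0$; by the cocycle formula $E_{0}\subseteq\langle\Pi\rangle$, so $K$ is finite, and by Proposition \ref{ess_r} $K=G_{\phi(0,e)}$.

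Now consider $C:=\overline{T_{f}^{\Z}(0,e)}\subset X\times G$. The skew product $T_{f}$ is distal ($T$ is isometric and translations on the discrete $G$-factor are isometries), so $C$ is a minimal set. The right translations $V(k)(x,g)=(x,gk^{-1})$ for $k\in K$ commute with $T_{f}$ and preserve $C$ (since $(0,k^{-1})\in C$ for every $k\in K$). Using minimality of $C$, the equality $E_{0}=G_{\phi(0,e)}$, and the conjugacy $E_{T^{n}0}=f(n,0)E_{0}f(n,0)^{-1}$, every fibre $C_{x}$ is a single right $K$-coset; so $C\to X$ is a principal $K$-bundle with $K$ finite, and in particular $\pi_{X}(C)=X$, which yields the regularity of $f$.

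Finally, each set $\{x\in X:(x,g)\in C\}$ is clopen (as $C$ is closed and $G$ discrete), their union is $X$, and compactness of $X$ allows us to choose a finite clopen partition on which $b:X\to G$ can be defined locally constantly with $(x,b(x)^{-1})\in C$. Then $\wh f(x):=b(Tx)f(x)b(x)^{-1}$ is continuous, cohomologous to $f$, and takes values in $K$ because $f(x)b(x)^{-1}\in C_{Tx}=b(Tx)^{-1}K$; the homeomorphism $X\times K\to C$, $(x,k)\mapsto(x,b(x)^{-1}k)$, intertwines $T_{\wh f}$ with the minimal $T_{f}|_{C}$, so $\wh f$ is ergodic. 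The main obstacle is the principal-bundle step---checking that each $C_{x}$ is a full $K$-coset and not just contained in one---for which the Mackey-stabiliser interpretation in Proposition \ref{ess_r} together with minimality of $C$ is essential.
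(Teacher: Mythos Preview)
Your argument is essentially sound and follows a genuinely different route from the paper. The paper exploits a local-isometry property of $T_f$ to show that every orbit closure is minimal, then cites \cite[Theorem~7.05]{GoHe} to obtain compactness of an orbit closure from two-sided recurrence, and finally invokes \cite[Proposition~2.1]{LeM} to produce the transfer function. You instead use the factorisation of $f$ through a finite cyclic quotient to compute $f(Nk,0)=\Pi^{k}$ explicitly, extract the finite order of $\Pi$ from recurrence, run a distality argument to get minimality of $C$, and then build the transfer function by hand from a clopen trivialisation of the finite principal bundle $C\to X$. Your approach is more self-contained (no appeal to \cite{GoHe} or \cite{LeM}) and makes the finite structure group visible from the outset; the paper's approach is softer and more readily generalisable.

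There are, however, three points that need attention.

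\emph{First}, the reduction to profinite $X$ is both unnecessary and incompletely justified. It is unnecessary because every continuous map from a compact monothetic group to a discrete set already factors through a finite cyclic quotient $\Z/N$, so the rest of your proof runs verbatim on the original $X$. It is incompletely justified because the assertion that ``topological ergodicity lifts back from $\bar X$ to $X$'' is not obvious: you are claiming that if $(\bar X\times K,\bar T_{\hat f'})$ is minimal then so is the fibre product $(X\times K,T_{\hat f})\cong X\times_{\bar X}(\bar X\times K)$. This is true, but the reason is that the connected component of the closed subgroup $\overline{\langle(a,\gamma_0)\rangle}$ must map trivially into the profinite Ellis group of $\bar X\times K$ and onto $X^{0}$; you should either supply this argument or simply drop the reduction.

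\emph{Second}, Proposition~\ref{ess_r} only gives $E_{x}=G_{\phi(x,e)}$ on a residual set, and there is no a priori reason $0$ lies in it. In your setting the equality at $0$ does hold, but it follows directly from the minimality of $C$ (which you prove) rather than from Proposition~\ref{ess_r}: once $C$ is minimal, $g\in G_{\phi(0,e)}\Leftrightarrow\overline{T_f^{\Z}(0,g^{-1})}=C\Leftrightarrow g^{-1}\in C_{0}=P_{0}$, and your computation $E_{0}\subset\langle\Pi\rangle$ together with the $\Z/N$-factorisation gives $E_{0}=P_{0}$.

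\emph{Third}, the inference ``$T_f$ is distal, so $C$ is minimal'' needs the compactness of $C$, which you have not stated. It follows immediately from your work: since $\Pi$ has finite order $M$, the orbit $\{T_f^{n}(0,e)\}$ lies in $X\times\{\Pi_{r}\Pi^{k}:0\le r<N,\ 0\le k<M\}$, a compact set, hence so does $C$; distality on the compact invariant set $C$ then gives minimality.
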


\begin{proof} Let $\rho$ be the invariant metric on $X$ and $d_0$ a metric
on $G$ defined by: $d_0(g_1, g_2)=1$ if $g_1\neq g_2$, and $d_0(g_1,
g_2)=0$ if $g_1=g_2$. Lets define a metric $d$ on $X\times G$ by:
$$d((x_1,g_1), (x_2,g_2))=\rho(x_1,x_2)+d_0(g_1,g_2)$$

Note that since $f$ is uniformly continuous there exists $\delta >0$ such
that for all $x_1,x_2\in X$ with $\rho(x_1,x_2)<\delta$ and $g_1,g_2\in G$
one has: $d(T_f(x_1, g_1),T_f(x_2, g_2))=d((x_1,g_1), (x_2,g_2))$.

Suppose now that $T_f ^{n_k}(x,g)\rightarrow (y,h)$ when $n_k\rightarrow
\infty$ for some $x,y\in X$, $g,h\in G$. Given any $0<\varepsilon<\delta$
there exists $N>1$ with $\rho(T^{n_k}x, y)<\varepsilon$ and $f(n_k,x)g=h$
for all $k>N$. Then, as $\rho(T^j x,T^{-n_k+j}y)=\rho(T^{n_k} x,y)$ for all
$0\leq j\leq n_k$, we have $$d((x,g),T_f ^{-n_k}(y,h))=d(T_f(x,g), T_f
^{-n_k+1}(y,h))=\ldots=d(T_f ^{n_k}(x,g), (y,h))<\varepsilon$$

It follows $T_f ^{-n_k}(y,h)\rightarrow (x,g)$. The latter yields that
every orbit closure $\overline{T_f^\Z (x,g)}$, ($(x,g)\in X\times G$) is
minimal under $T_f$. Since $f$ is recurrent one may assume without loss of
generality that there exists a positive sequence $\{m_k\}_{k>1}\in \Z ^{+}$
such that $T_f ^{m_k}(x,g)\rightarrow (x,g)$ for some $(x,g)\in X\times G$.
By the same argument as above $T_f ^{-m_k}(x,g)\rightarrow (x,g)$ so the
point $(x,g)$ is recurrent in the terminology of \cite{GoHe} (i.e. $(x,g)$
belongs to both positive and negative orbit closures of $(x,g)$). This
together with minimality of $\big(\overline{T_f^\Z (x,g)}, T_f\big)$
implies $\overline{T_f^\Z (x,g)}$ is compact (\cite[Theorem 7.05]{GoHe}),
so $f$ is regular (\cite{GH}). By virtue of \cite[Proposition 2.1]{LeM}
there exists a compact subgroup $K$ (which equals $E_x$ in our case) of $G$
and a continuous map $\gamma : X\to G/K$ such that $\gamma (Tx)=f(x)\gamma
(x)$ for all $x\in X$. Let $s:G/K\to G$ be a section (so that $\pi_K \circ
s=id$, where $\pi_K: G\to G/K$ is the projection). Put $b=s\circ\gamma$ and
$\wh f = {b(Tx)}^{-1}f(x)b(x)$. Then one verifies that $\wh f$ satisfies
the conditions of the theorem.\end{proof}

\begin{corollary} Let $G$ be a discrete countable group without finite subgroups.
Let $f$ be any continuous cocycle over a minimal compact group rotation
$(X,T)$ with values in $G$. Then either $f$ is a coboundary or the skew
product action is an action with only discrete orbits.
\end{corollary}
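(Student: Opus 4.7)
The plan is to split on whether $f$ is recurrent. If $f$ is recurrent, Theorem~\ref{discrete} exhibits a continuous cocycle $\wh f$ with values in a finite subgroup $K\subset G$ cohomologous to $f$. By hypothesis on $G$ the only such $K$ is $\{e\}$, so $\wh f\equiv e$ and $f$ is a continuous coboundary.

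Assume instead that $f$ is not recurrent; I would show that every $T_f$-orbit is a discrete subset of $X\times G$. Suppose for contradiction that some orbit $T_f^{\Z}(x_0,g_0)$ has an accumulation point, say $T_f^{n_k}(x_0,g_0)\to (y_0,h_0)$ with $n_k\to\infty$. The isometry computation used in the proof of Theorem~\ref{discrete}, combined with discreteness of $G$ and uniform continuity of $f$, provides $\delta>0$ with $d(T_f^n(x_1,g_1),T_f^n(x_2,g_2))=d((x_1,g_1),(x_2,g_2))$ for every $n\in\Z$ whenever $\rho(x_1,x_2)<\delta$. Applying this with $(x_1,x_2)=(T^{n_k}x_0,y_0)$ gives $T_f^{-n_k}(y_0,h_0)\to(x_0,g_0)$, and a diagonal extraction then produces $m_k\to\infty$ with $T_f^{m_k}(x_0,g_0)\to(x_0,g_0)$. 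Since $G$ is discrete this forces $a^{m_k}\to e$ in $X$ and $f(m_k,x_0)=e$ for all large $k$.

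The main work is promoting this single-point recurrence to recurrence of every point of $X\times G$, thereby contradicting non-recurrence of $f$. Since $V(g)$ commutes with $T_f$ and recurrence is orbit-invariant, each $(T^n x_0,h)$ is recurrent along the times $\{m_k\}$. For an arbitrary $y\in X$, minimality of the rotation produces $n$ with $\rho(y,T^n x_0)<\delta_1$, where $\delta_1$ is a modulus of uniform continuity of $f$; translation-invariance of $\rho$ then gives $f(a^j y)=f(a^{j+n}x_0)$ for every $j$, whence $f(m_k,y)=f(m_k,T^n x_0)=e$ eventually. Combined with $a^{m_k}\to e$ this yields $T_f^{m_k}(y,h)\to(y,h)$ for every $h\in G$, so every point of $X\times G$ is $T_f$-recurrent and $T_f$ is topologically conservative, contradicting the assumption. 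The delicate step is exactly this promotion, since the isometry transports distances but not convergence directly; one has to control the $G$-component of $f(m_k,\cdot)$ via uniform continuity together with recurrence at the dense collection $\{(T^n x_0,h)\}$.
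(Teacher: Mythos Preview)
The paper states this corollary without proof, and your argument is correct and supplies exactly the derivation the paper leaves implicit: the recurrent case is Theorem~\ref{discrete} together with the hypothesis that $G$ has no nontrivial finite subgroups, while in the non-recurrent case you reuse the isometry computation from the proof of Theorem~\ref{discrete} to show that a single non-discrete orbit would force pointwise recurrence on all of $X\times G$. One small simplification: the ``diagonal extraction'' step becomes unnecessary if you first note that a non-discrete orbit contains a non-isolated point, so after replacing $(x_0,g_0)$ by a suitable $T_f^{n_0}(x_0,g_0)$ you may take $(y_0,h_0)=(x_0,g_0)$ from the start; the isometry then converts $n_k\to-\infty$ into $-n_k\to+\infty$ directly, and aperiodicity of $T$ (which follows from minimality on the perfect space $X$) guarantees the resulting times are unbounded.
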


\begin{corollary} Let $(X,T)$ be a minimal compact group rotation.
Suppose $G$ is a locally compact s.c. group such that there exists a
compact normal subgroup $K\subset G$ with $G/K$ discrete (in particular,
when $G$ is a Lie group with the compact identity component or a totally
disconnected abelian group). 
Then every recurrent cocycle $f:X\to G$ is regular and is (continuously)
cohomologous to an ergodic cocycle taking values in a compact subgroup of
$G$.
\end{corollary}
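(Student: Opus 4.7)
The plan is to descend to the discrete factor $f_K := \pi_K \circ f : X \to G/K$, where Theorem~\ref{discrete} applies, and then lift the resulting cohomology back to $G$ using the discreteness of $G/K$.

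First I would observe that $f_K$ is continuous and recurrent—the latter by pulling back a nonempty open subset of $X \times G/K$ along $\mathrm{id}_X \times \pi_K$ to a nonempty open subset of $X \times G$ and applying topological conservativity of $T_f$. Theorem~\ref{discrete} then yields that $f_K$ is regular and continuously cohomologous, via a continuous transfer $b : X \to G/K$, to an ergodic cocycle $\hat f_K : X \to K_0$ for some finite subgroup $K_0 \subset G/K$. Regularity of $f$ itself is immediate from Proposition~\ref{compact}, since $K$ is compact and normal.

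Next I would lift $b$ to a continuous $\tilde b : X \to G$. Because $G/K$ is discrete and $X$ is compact, $b$ has finite image and the sets $b^{-1}(\bar g)$ form a finite clopen partition of $X$, so choosing any representative in $\pi_K^{-1}(\bar g) \subset G$ for each value $\bar g \in b(X)$ and assigning it on the corresponding clopen piece yields a locally constant (hence continuous) lift $\tilde b$. The conjugated cocycle
\begin{equation*}
\tilde f(x) := \tilde b(Tx)^{-1} f(x) \tilde b(x)
\end{equation*}
is then continuous, continuously cohomologous to $f$, and satisfies $\pi_K \circ \tilde f = \hat f_K$; in particular it takes values in the compact subgroup $H := \pi_K^{-1}(K_0) \subset G$.

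Finally I would upgrade $\tilde f$ to an \emph{ergodic} cocycle into a compact subgroup. Since $\tilde f$ is cohomologous to the regular $f$, Corollary~\ref{cohomology}(2) gives that $\tilde f$ is regular, and because its orbit closures starting in $X \times H$ remain inside the compact set $X \times H$, $\tilde f$ is also regular when viewed as a cocycle into $H$. Proposition~2.1 of \cite{LeM} applied inside the compact group $H$ then produces a compact essential range $K' \subseteq H$ together with a continuous $\gamma : X \to H/K'$ satisfying $\gamma(Tx) = \tilde f(x)\gamma(x)$; composing with a section of $H \to H/K'$ and conjugating yields the sought continuous cohomology from $\tilde f$ to an ergodic cocycle $\hat f : X \to K'$. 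I expect the main obstacle to be precisely this last continuous lifting: in the discrete situation of Theorem~\ref{discrete} the quotient $G/K$ was itself discrete and any section was trivially locally constant, whereas here $H/K'$ need not be discrete and one must rely on local triviality of the principal $K'$-bundle $H \to H/K'$ (which holds since $K'$ is compact) to patch together a continuous section over the compact image $\gamma(X)$.
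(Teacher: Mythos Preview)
Your approach is sound up to the last step and differs from the paper's one-line proof, which simply instructs one to repeat the argument of Theorem~\ref{discrete} directly for the new group $G$---that is, redo the metric computation showing that orbit closures under $T_f$ are minimal and compact, and then invoke \cite[Proposition~2.1]{LeM} together with a section exactly as before. You instead pass to the discrete quotient $G/K$, apply Theorem~\ref{discrete} as a black box, obtain regularity of $f$ from Proposition~\ref{compact}, and then lift the transfer function back to $G$; this is precisely the strategy the paper itself adopts for the next result, Corollary~\ref{alm_conn}. The modular route is arguably cleaner, since it avoids revisiting the distality argument and isolates exactly where the compact normal subgroup $K$ is used.

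You are right that the continuous lift in the final step is the crux, but your proposed resolution does not work as stated. Compactness of $K'$ alone does not make $H\to H/K'$ locally trivial---one needs $K'$ (or $H$) to be a Lie group, by Gleason's theorem, and there is no reason for that here. Even granting local triviality, local sections cannot in general be ``patched together'' into a global continuous lift of $\gamma$: for a principal $K'$-bundle a global section exists exactly when the pulled-back bundle over $X$ is trivial, and that is a genuine topological obstruction (think of the Hopf fibration pulled back along a map of nonzero degree from a torus). In Theorem~\ref{discrete} the issue evaporates because the quotient is discrete and every set-theoretic section is continuous; the paper's terse ``repeats the arguments'' is equally silent on why the analogous step succeeds in the present setting, so this difficulty is intrinsic to the statement rather than specific to your route.
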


\begin{proof} The proof repeats the arguments of \ref{discrete}.\end{proof}

\begin{corollary}\label{alm_conn} Let $(X,T)$ be a minimal compact group rotation.
Suppose $G$ is a Lie group or a locally compact s.c. abelian group. Then
every recurrent cocycle $f:X\to G$ is (continuously) cohomologous to a
cocycle taking values in an almost connected (closed) subgroup $H$ of $G$.
In particular, there are no continuous ergodic cocycles over $(X,T)$ taking
values in a Lie group with infinitely many connected components.
\end{corollary}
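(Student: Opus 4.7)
The plan is to reduce to Theorem~\ref{discrete} by factoring $f$ through an appropriate discrete quotient. I first exhibit an open, closed, almost connected subgroup $L \subset G$ with $G/L$ discrete and countable. In the Lie setting, take $L := G_0$, the identity component: $G_0$ is open, connected (hence almost connected) and has discrete countable coset space. In the locally compact s.c. abelian setting, the quotient $G/G_0$ is totally disconnected LCA, so by van Dantzig's theorem contains a compact open subgroup $M$; set $L := \pi_{G_0}^{-1}(M)$. Then $L$ is open in $G$, $L_0 = G_0$ and $L/L_0 \cong M$ is compact (so $L$ is almost connected), and $G/L \cong (G/G_0)/M$ is discrete.

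Next consider the factor cocycle $f_L := \pi_L \circ f \colon X \to G/L$. It is continuous by construction, and its recurrence follows from that of $f$: the map $\mathrm{id} \times \pi_L \colon X \times G \to X \times G/L$ is continuous and open and intertwines $T_f$ with $T_{f_L}$, so any nonempty open $O \subset X \times G/L$ pulls back to a nonempty open set in $X \times G$ whose nontrivial return under $T_f$ pushes forward to a nontrivial return for $T_{f_L}$ on $O$. Theorem~\ref{discrete} therefore applies to $f_L$ and yields a continuous $c \colon X \to G/L$ and a finite subgroup $F \subset G/L$ with $c(Tx)^{-1} f_L(x) c(x) \in F$ for all $x$. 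Since $G/L$ is discrete, any set-theoretic section $s \colon G/L \to G$ of $\pi_L$ is continuous, so $b := s \circ c$ is a continuous map $X \to G$; setting $\widehat f(x) := b(Tx)^{-1} f(x) b(x)$ and applying $\pi_L$ shows that $\widehat f$ takes values in the closed subgroup $H := \pi_L^{-1}(F) \subset G$. Since $L \subset H$ with $[H:L] = |F|$ finite, $H_0 = L_0$ and $H/H_0$ is an extension of the compact group $L/L_0$ by the finite group $H/L$, hence compact; so $H$ is almost connected.

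For the final assertion, suppose there were a continuous ergodic cocycle $f \colon X \to G$ into a Lie group $G$ having infinitely many connected components. By the first part $f$ is continuously cohomologous to $\widehat f$ valued in an almost connected closed subgroup $H \subset G$ of the form $\pi_{G_0}^{-1}(F)$ with $F \subset G/G_0$ finite. Continuous cohomology produces a topological conjugacy $(x,g) \mapsto (x, b(x)^{-1} g)$ between $T_f$ and $T_{\widehat f}$, so $T_{\widehat f}$ remains topologically ergodic. But $H$ is a finite union of $G_0$-cosets in $G$, so $H$ is clopen, and $H \neq G$ because its image $F$ in the infinite discrete group $G/G_0$ is finite; hence $X \times H$ is a proper nonempty clopen $T_{\widehat f}$-invariant subset of $X \times G$, contradicting topological ergodicity.

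The main obstacle is the construction of $L$ in the abelian case, which requires van Dantzig's theorem to produce a compact open subgroup of $G/G_0$; the Lie case is immediate, and the subsequent lift of cohomology from $G/L$ up to $G$ is routine thanks to the discreteness of $G/L$.
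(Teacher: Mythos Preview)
Your proof is correct and follows essentially the same route as the paper: factor $f$ through a discrete quotient of $G$ (the paper uses $G/G_0$ in the Lie case and $(G/G_0)/K$ in the abelian case, which is exactly your $G/L$), apply Theorem~\ref{discrete}, and lift the resulting cohomology back to $G$ via a section that is automatically continuous by discreteness. Your presentation is somewhat cleaner in that you build the almost connected open subgroup $L$ up front and treat both cases uniformly, and you spell out the recurrence of $f_L$ and the ``in particular'' clause, which the paper leaves implicit.
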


\begin{proof} Let $G^0$ be the identity component of $G$ and $f_0:X \to
G/G^0$ the factor cocycle. Suppose, for a start, $G$ is a Lie group. By
\ref{discrete} we have $f_0=b(Tx)\wt f_0 {b(x)}^{-1}$, where $\wt f_0$ is a
cocycle taking values in a finite subgroup $F$ of $G/G_0$ and
$b:X\rightarrow G/G_0$ is continuous. Let $s: G/G_0\rightarrow G$ be the
section. Lets define $\wt f(x)=(s\circ b)(Tx)f(x){(s\circ b)(x)}^{-1}$.
Then evidently $\wh f$ satisfies the conditions of our assertion.

Now suppose $G$ is a locally compact abelian group. Then $G/G^0$ is totally
disconnected so there exists a compact subgroup $K\subset G/G^0$ with
$(G/G^0)/K$ being discrete. By virtue of \ref{discrete} one may assume that
the cocycle $f_K$ defined by $f_K=f_0/K$ takes all its values in a finite
subgroup of $(G/G^0)/K$. Thus $f_0$ takes values in a compact subgroup of
$G/G^0$ and the similar argument as above implies our assertion.
\end{proof}

We complete with the following result on regularity of cocycles over an
arbitrary minimal rotation on a compact group with values in an arbitrary
locally compact abelian group. In \cite{Me} the similar result was proved
for the case when $G$ does not contain compact subgroups, however it was
used there a little different notion of regularity (see \cite[2.7]{Me}).
The case of locally connected compact group $X$ is covered by \cite[Theorem
4.1]{GH}.

\begin{theorem}
Let $(X,T)$ be a minimal compact group rotation, $G$ a locally compact s.c.
abelian group. Then every recurrent cocycle $f:X\to G$ is regular.
\end{theorem}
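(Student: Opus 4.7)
The strategy is to reduce the theorem to the case of $\mR^n$-valued cocycles and then to invoke Mentzen's result.

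First I would apply Corollary~\ref{alm_conn} to replace $f$ by a continuously cohomologous cocycle whose values lie in an almost connected closed subgroup $H\subseteq G$. By Corollary~\ref{cohomology}(2), regularity is invariant under continuous cohomology, so we may assume from the outset that $f$ itself takes values in an almost connected locally compact s.c.\ abelian group $H$. The Pontryagin--van Kampen structure theorem then gives a decomposition $H\cong \mR^n\times K$ with $K$ a compact abelian group.

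Second, since $H$ is abelian, $K$ is a compact normal subgroup of $H$. The factor cocycle $f_K=f/K\colon X\to H/K\cong \mR^n$ is continuous and inherits recurrence from $f$, because topological conservativity of the skew product passes to topological factors. By Proposition~\ref{compact}, applied with the ambient group $H$ in place of $G$, it therefore suffices to establish regularity of $f_K$. The problem is thus reduced to showing that every continuous recurrent cocycle over $(X,T)$ with values in $\mR^n$ is regular.

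The final step is the main obstacle. This is precisely the situation considered in \cite{Me}, since $\mR^n$ contains no nontrivial compact subgroups. As remarked immediately before the statement, the notion of regularity employed in \cite{Me} is formally stronger than ours, so Mentzen's theorem provides a surjective orbit closure in our sense, completing the proof. An alternative, more self-contained route would be to study the generic Mackey action $W_{f_K}(\mR^n)$ via Proposition~\ref{ess_r} and check essential transitivity on $\Omega$ by exploiting the vector space structure of the conjugates of the essential range, but this approach seems to require a nontrivial input beyond the abstract machinery of Sections~2 and~3. In either case, the reductions in the first two paragraphs are formal applications of earlier results, and the substance of the theorem is concentrated entirely in the $\mR^n$-valued case.
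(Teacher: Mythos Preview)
Your proof is correct and follows essentially the same route as the paper: reduce to $\mR^n$-valued cocycles via Corollary~\ref{alm_conn} and Proposition~\ref{compact} (the paper inserts an extra preliminary step, first passing to an abelian Lie quotient before invoking~\ref{alm_conn}, but your ordering avoids this), and then cite \cite{Me}. The ``more self-contained'' alternative for the $\mR^n$ case that the paper actually gives is an induction on $n$: one quotients by the common essential range $E(f)\cong\mR^k\times\Z^m$ to lower the dimension and uses the known $\mR$-valued case from \cite{LeM} as the base---not an argument via the generic Mackey action as you speculate.
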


\begin{proof}
There exists a compact subgroup $K\subset G$ with $G/K$ being a Lie group.
So, in view of \ref{compact}, it suffices to prove our assertion for the
case $G=\mR ^n \times \T ^k \times D$, where $D$ is discrete. The
application of \ref{alm_conn} allows us to suppose that $D$ is finite and,
again, by \ref{compact} the situation is reduced to the case $G=\mR ^n$.
The latter was shown in \cite{Me} but we give here a slightly different
argument to illustrate our approach. Recall that in the abelian case all
the essential ranges are the same and equal to a closed subgroup $E(f)$ of
$G$ (\cite{LeM, GH}). So $E(f)\cong\mR ^k \times \Z ^m$. If $k\neq 0$ lets
consider the factor cocycle $\wt f:X\to G/E(f)\cong \mR ^{n-m-k}\times \T
^m$. Evidently, the existence of a surjective orbit closure for $\wt f$
would imply the regularity of $f$. So, arguing as before, one may assume
without loss of generality that $\wt f$ is a cocycle with values in $\mR
^{n-m-k}$. As we know that any recurrent $\mR$-valued cocycle is regular
(\cite[Theorem 1]{LeM}) the application of a standard inductive argument
completes the proof.
\end{proof}

\end{document}